\newtheorem{theorem}{Theorem} 
\newtheorem{corollary}{Corollary}
\newtheorem{main}{Theorem} 
\begin{document}
	
\title[Limit cycles and invariant algebraic curves]{Limit cycles and invariant algebraic curves}
	
\author[Armengol Gasull and Paulo Santana]
{Armengol Gasull$^1$ and Paulo Santana$^2$}

\address{$^1$ Universitat Aut\`{o}noma de Barcelona, 08193 Bellaterra, Barcelona, Spain}
\email{armengol.gasull@uab.cat}

\address{$^2$ IBILCE--UNESP, CEP 15054--000, S. J. Rio Preto, SP, Brazil}
\email{paulo.santana@unesp.br}
	
\subjclass[2020]{Primary: 34C07. Secondary: 34A05; 37G15}
	
\keywords{Hilbert 16th problem; limit cycles; invariant algebraic curves; Kolmogorov systems; Game Theory models}
	
\begin{abstract}
	We give lower bounds in terms of~$n,$ for the number of limit cycles of polynomial vector fields of degree~$n,$ having any prescribed invariant algebraic curve.  By applying them when the ovals of this curve are also algebraic limit cycles we obtain a new recurrent property for the Hilbert numbers. Finally, we apply our results to two important families of models: Kolmogorov systems and a general family of systems appearing in Game Theory. 
\end{abstract}
	
\maketitle

\section{Introduction and statement of the main results}

Let $X=(P,Q)$ be a planar polynomial vector field and $\gamma$ be one of its orbits. We say that~$\gamma$ is \emph{algebraic} if it is contained in an algebraic set. That is, if there is a polynomial $C\in\mathbb{R}[x,y]$ such that $\gamma\subset C^{-1}(\{0\})$. For simplicity, we denote the partial derivatives of a polynomial $C$ by $C_x$ and $C_y$. If we choose $C=0$ to be irreducible, then this condition reads as
\begin{equation}\label{1}
	C_x P+ C_y Q=KC.
\end{equation}
where  is $K\in\mathbb{R}[x,y]$ is known as the \emph{cofactor} of $C$.  It is easy to see that if the degree of $X$ is $n$, then the degree of $K$ is at most $n-1$.

In general, given $C\in\mathbb{R}[x,y],$ not necessarily irreducible, we say that $C=0$ is an \emph{invariant algebraic curve} for $X$ if~$C$ satisfies equation \eqref{1} for some $K\in\mathbb{R}[x,y]$ of degree at most $n-1.$ The term \emph{invariant} follows from the fact that if $C$ satisfies~\eqref{1} and the set $ C^{-1}(\{0\})$ is not empty, then it is invariant by the flow of~$X$. 

In particular, if $\gamma$ is a limit cycle  (i.e. isolated periodic orbit)  then it is said to be an \emph{algebraic limit cycle}.  If the curve  $C=0$ containing $\gamma$ is irreducible and has degree $c>0$, then we say that $\gamma$ has degree~$c$.

 For an account of known results on planar polynomial vector fields with algebraic invariant curves see~\cite[Chapter~$8$]{DumLliArt2006} and its references. The interest on this subject goes back to the seminal works of Darboux and Poincaré.
 
In this paper we study how many limit cycles the family of polynomial vector fields, with a prescribed invariant algebraic, can have. We obtain lower bounds for this maximum number of limit cycles depending on the Hilbert numbers and their known lower bounds. As a consequence, we also give a new recurrent property for the Hilbert numbers themselves. Before stating our results more precisely we recall several known properties of the Hilbert numbers.

Let $\mathcal{X}^n$ be the set of planar polynomial vector fields $X=(P,Q)$ of degree~$n,$ i.e. $\max\{\deg P,\deg Q\}=n.$ Given $X\in\mathcal{X}^n$, let $\pi(X)\in\mathbb{Z}_{\geqslant0}\cup\{\infty\}$ denote its number of limit cycles. Given $n\in\mathbb{N}$ the \emph{Hilbert number} $\mathcal{H}(n)\in\mathbb{Z}_{\geqslant0}\cup\{\infty\}$ is
\[
	{H}(n)=\sup\{\pi(X)\colon X\in\mathcal{X}^n\}.
\]
It has been named after David Hilbert due to his famous list of open problems for the $20th$ century~\cite{Browder}, being the second part of the $16th$ problem about upper bounds for $\mathcal{H}(n)$. Nowadays,  it is not even known if $\mathcal{H}(2)$ is finite. For small values of $n$, the known best lower bounds are given in Table~\ref{Table1}.

{\renewcommand{\arraystretch}{1.1}
	\begin{table}[h]
		\caption{Summary of the known lower bounds of the Hilbert numbers.}\label{Table1}
		\begin{tabular}{|c||c|c|c|c|c|c|c|c|c|c|c|}
			\hline
			$n$& $2$ &	$3$ & $4$ & $5$ & $6$ & $7$ & $8$ & $9$ & $10$ &\ldots & $n$ \\
			\hline
			$\mathcal{H}(n)$ & $4$ & $13$ & $28$ & $37$ & $53$ & $74$ &$96$ &$120$ & $142$ &\ldots& $O(n^2\ln n)$ \\
			\hline
		\end{tabular}
\end{table}}

The value of $\mathcal{H}(n)$ for $n=2$ is obtained in~\cites{ChenWang1979,Son1980}, for $n=3$ in~\cite{LiLiuYang2009}, and for $n\geqslant 4$ in~\cite{ProTor2019}. The asymptotic bound follows from the results of~\cite{ChrLlo1995,IteShu2000,Lijibin2003}. 

In the recent work~\cite{GasSan2024}, the authors proved that the Hilbert numbers are realizable by structurally stable vector fields and, as a byproduct, by hyperbolic limit cycles. 

Let us state our results. Given $C\in\mathbb{R}[x,y]$, a point $p\in\mathbb{R}^2$ is a \emph{singular point} of~$C=0$ if $C(p)=0$ and $\nabla C(p)=0$. We say that $C$ is \emph{non-degenerated} if $C=0$ has at most a finite number of singular points. If $C=0$ does not have singular points, it is \emph{non-singular}. An \emph{oval} of $C=0$ is a bounded connected component of this set without singular points of $C=0.$ Let $\mathcal{O}(C)\in\mathbb{Z}_{\geqslant0}$ denote the number of ovals of $C=0$. 

Let $\mathcal{X}_C^n\subset\mathcal{X}^n$ be the set of planar polynomial vector fields of degree $n$ having $C=0$ as an invariant algebraic curve. For $n\in\mathbb{N}$ we define the \emph{Hilbert-C number}~as
\[
	\mathcal{H}_C(n)=\sup\{\pi(X)\colon X\in\mathcal{X}_C^n\}.
\]
We say that $X\in\mathcal{X}_C^n$ is \emph{non-degenerated} if it has at most finitely many singularities at $C=0$. Let $\Sigma_C^n\subset\mathcal{X}_C^n$ denote the family of non-degenerated vector fields.
	
In our main result we use the Hilbert numbers $\mathcal{H}(n)$  to provide a lower bound for $\mathcal{H}_C$ in such way that every oval of $C=0$ is a hyperbolic limit cycle.
	
\begin{main}\label{Main1}
	Given $C\in\mathbb{R}[x,y]$ non-degenerated of degree $c>0$ and $n\in\mathbb{Z}_{\geqslant0}$, the following statements hold.
	\begin{enumerate}[label=(\alph*)]
		\item If $\mathcal{H}(n)<\infty$, then there is $X\in\Sigma_C^{n+c}$ such that every oval of $C=0$ is a limit cycle and $\pi(X)\geqslant\mathcal{H}(n)+\mathcal{O}(C),$
		\item If $\mathcal{H}(n)=\infty$, then for each $k\in\mathbb{N}$ there is $X_k\in\Sigma_C^{n+c}$ such that every oval of $C=0$ is a limit cycle and $\pi(X_k)\geqslant k+\mathcal{O}(C),$
	\end{enumerate}
being all limit cycles of $X$ or $X_k$ hyperbolic. Hence,  \begin{equation*}
	\mathcal{H}_C(n+c)\geqslant\mathcal{H}(n)+\mathcal{O}(C),\end{equation*}
and as a consequence, for $n$ big enough,  $\mathcal{H}_C(n)$  increases at least as fast as $O(n^2\ln n)$.	
\end{main}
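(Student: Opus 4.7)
The plan is to construct $X$ as
\[X=CY+\lambda(C_y,-C_x),\qquad \lambda>0,\]
with $Y=(P_0,Q_0)\in\mathcal{X}^n$ a Hilbert-number realiser. The identity $\nabla C\cdot X=C(C_xP_0+C_yQ_0)$ makes $C=0$ invariant with cofactor $K=C_xP_0+C_yQ_0$ of degree $\le n+c-1$; on $\{C=0\}$, $X$ reduces to $\lambda(C_y,-C_x)$, so its zeros there are precisely the finitely many singular points of $C$, yielding $X\in\Sigma_C^{n+c}$ and making each oval $\gamma_i$ of $C=0$ a periodic orbit of $X$.

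By~\cite{GasSan2024} I fix $Y\in\mathcal{X}^n$ structurally stable with $\mathcal{H}(n)$ hyperbolic limit cycles for case (a), and analogous $Y_k$ with $\ge k$ hyperbolic limit cycles for case (b). After flipping $C\mapsto -C$ if needed, I pick an open disk $D\subset\{C>0\}$ and, by an affine change of coordinates followed by a homothety (both preserving $\mathcal{X}^n$), bring every limit cycle of $Y$ inside $D$. On $D$ one has $C\ge c_0>0$, so $CY$ and $Y$ share the same orbits up to a positive time reparameterisation, in particular the same hyperbolic limit cycles. Since $X\to CY$ in $C^r(\overline D)$ as $\lambda\to 0^+$, these hyperbolic limit cycles persist in $X$ for $\lambda$ sufficiently small.

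To ensure hyperbolicity of each oval I observe that the $\lambda$-terms contribute nothing to $\mathrm{div}(X)$, whence $\mathrm{div}(X)|_{C=0}=K$, and the characteristic exponent of $\gamma_i$ as a periodic orbit of $X$ comes out to $\lambda^{-1}\oint_{\gamma_i}K/|\nabla C|\,ds$. This is a non-trivial linear functional of $Y$ (for $n\ge 1$ it is strictly positive when $Y$ points along $\nabla C$ on $\gamma_i$, which can be arranged within $\mathcal{X}^n$; a tiny enlargement of the ansatz, e.g.\ replacing $\lambda$ by a linear polynomial $R$, handles $n=0$). Its non-vanishing on each of the $\mathcal{O}(C)$ ovals is therefore a generic condition on $Y$, compatible with structural stability: I may perturb $Y$ arbitrarily little to fulfil it without losing any of its hyperbolic limit cycles in $D$. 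Then every oval of $C=0$ is a hyperbolic limit cycle of $X$, so $\pi(X)\ge\mathcal{H}(n)+\mathcal{O}(C)$. Case (b) is identical with $Y_k$ in place of $Y$, and the asymptotic estimate follows from $\mathcal{H}_C(n)\ge\mathcal{H}(n-c)=\Omega(n^2\ln n)$ for $n$ large.

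The delicate point is the joint genericity argument in the third paragraph: a single small perturbation of $Y$ must simultaneously preserve the $\mathcal{H}(n)$ hyperbolic limit cycles in $D$ \emph{and} render each of the $\mathcal{O}(C)$ oval integrals nonzero. Both are open conditions in $\mathcal{X}^n$ — structural stability for the former, complement of a finite union of hyperplanes in coefficient space for the latter — so a generic arbitrarily small perturbation achieves them jointly, and a final perturbation can remove any remaining non-hyperbolic limit cycles of $X$ outside $D$.
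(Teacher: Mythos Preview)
Your construction $X=CY+\lambda(C_y,-C_x)$ follows the same template as the paper's $Z=CX_0+\varepsilon Y_{\mathrm{Chr}}$, where $Y_{\mathrm{Chr}}=(\alpha C-DC_y,\beta C+DC_x)$ is Christopher's vector field of Theorem~\ref{T2}. Expanding, $Z=C\bigl(X_0+\varepsilon(\alpha,\beta)\bigr)+\varepsilon D(-C_y,C_x)$, so the only structural difference is that the paper multiplies the Hamiltonian piece $(-C_y,C_x)$ by a \emph{linear} polynomial $D$ rather than your constant $\lambda$.

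That one change is exactly what forces your genericity detour. Your oval exponent $\lambda^{-1}\oint_{\gamma_i}K/|\nabla C|\,ds$ equals, by the divergence theorem, $\pm\lambda^{-1}\int_{\Gamma_i}\operatorname{div}Y\,dA$; this is indeed a non-trivial linear functional of $Y\in\mathcal{X}^n$ for $n\ge1$ (take $Y=(x,0)$, giving $\pm\operatorname{Area}(\Gamma_i)$ --- your justification via ``$Y$ pointing along $\nabla C$'' would require $n\ge c-1$), but it vanishes identically for $n=0$. So you must perturb $Y$ off finitely many hyperplanes while keeping its hyperbolic limit cycles, and treat $n=0$ separately. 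This works, but the paper avoids all of it: with the linear $D$, Theorem~\ref{T2} already certifies the ovals as hyperbolic for $Y_{\mathrm{Chr}}$, and then the characteristic exponent of each oval for $Z$ is $(1/\varepsilon)\int_{\gamma_i}(\nabla C\cdot X_0)\,dt_Y+\int_{\gamma_i}\operatorname{div}Y_{\mathrm{Chr}}\,dt_Y$, which is nonzero for small $\varepsilon$ uniformly in $n\ge0$. Your $n=0$ fix (replace $\lambda$ by a linear $R$) is precisely a return to Christopher's construction.

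Finally, your last sentence about a ``final perturbation'' to remove extraneous non-hyperbolic limit cycles is unnecessary: the theorem only claims hyperbolicity for the $\mathcal{H}(n)+\mathcal{O}(C)$ limit cycles exhibited, and that is all the paper's proof establishes as well.
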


We remark that if in Theorem~\ref{Main1} we look simply for  $X\in\mathcal{X}_C^{n+c}$ satisfying  $\pi(X)\geqslant\mathcal{H}(n)$, then the proof is much easier. Consider for instance the case $\mathcal{H}(n)<\infty.$ Take $Y\in\mathcal{X}^n$ such that $\pi(Y)=\mathcal{H}(n)$ and such that none of its limit cycles  intersects the curve $C=0.$ If we define $X=C Y\in\mathcal{X}_C^{n+c},$ then $\pi(X)=\pi(Y)$ but if $C=0$ is not empty, $X\notin \Sigma_C^{n+c},$ because the curve $C=0$ is full of singularities of $X.$

Next four results are corollaries of Theorem~\ref{Main1} and previous results on literature. The first two provide recurrent properties for the Hilbert type numbers. 

\begin{corollary}\label{Cor}
Let $L_n$ be any lower bound of $\mathcal{H}(n)$. Then $\mathcal{H}_C(n+c)\geqslant L_n+\mathcal{O}(C)$.	
\end{corollary}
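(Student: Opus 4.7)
The proof plan is to deduce the Corollary directly from Theorem~\ref{Main1} by a case split on whether $\mathcal{H}(n)$ is finite, using only the definition of ``lower bound.''

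First I would treat the case $\mathcal{H}(n)<\infty$. Since $L_n$ is a lower bound of $\mathcal{H}(n)$, we have $L_n\leqslant\mathcal{H}(n)$, so Theorem~\ref{Main1}(a) yields a vector field $X\in\Sigma_C^{n+c}$ with $\pi(X)\geqslant\mathcal{H}(n)+\mathcal{O}(C)\geqslant L_n+\mathcal{O}(C)$. Taking the supremum over $\mathcal{X}_C^{n+c}$ gives the claim.

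Next I would handle the case $\mathcal{H}(n)=\infty$. Here one needs a further subdivision according to whether $L_n$ is finite or infinite. If $L_n<\infty$, pick any integer $k\geqslant L_n$ and apply Theorem~\ref{Main1}(b) to obtain $X_k\in\Sigma_C^{n+c}$ with $\pi(X_k)\geqslant k+\mathcal{O}(C)\geqslant L_n+\mathcal{O}(C)$, so the supremum defining $\mathcal{H}_C(n+c)$ is at least $L_n+\mathcal{O}(C)$. If instead $L_n=\infty$ (which forces $\mathcal{H}(n)=\infty$ to be consistent with the lower bound hypothesis), then Theorem~\ref{Main1}(b) produces $X_k$ with arbitrarily many limit cycles, so $\mathcal{H}_C(n+c)=\infty$ and the inequality is trivially satisfied in $\mathbb{Z}_{\geqslant 0}\cup\{\infty\}$.

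There is essentially no obstacle: the Corollary is a bookkeeping consequence of Theorem~\ref{Main1}, since a lower bound on $\mathcal{H}(n)$ propagates through the inequality established by the theorem. The only mild subtlety is to address the infinite case cleanly so that the statement makes sense in the extended integers, which is handled by the case split above.
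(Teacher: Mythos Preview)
Your proof is correct and follows the same approach as the paper: both deduce the corollary immediately from the inequality $\mathcal{H}_C(n+c)\geqslant\mathcal{H}(n)+\mathcal{O}(C)$ established in Theorem~\ref{Main1}, combined with $L_n\leqslant\mathcal{H}(n)$. The paper's version is simply more terse, invoking that final inequality directly (which already subsumes both the finite and infinite cases in $\mathbb{Z}_{\geqslant0}\cup\{\infty\}$) rather than re-splitting into cases via parts~(a) and~(b).
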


From the above result, by taking the  values of Table~\ref{Table1} we obtain
\[
	\mathcal{H}_C(2+c)\geqslant4+\mathcal{O}(C), \, \mathcal{H}_C(3+c)\geqslant13+\mathcal{O}(C), \,  \mathcal{H}_C(4+c)\geqslant28+\mathcal{O}(C), \ldots
\]

\begin{corollary}\label{Cor0}
	Given $n\in\mathbb{N}$ and $m\geqslant1$ it holds
	\begin{equation}\label{3}
		\mathcal{H}(n+m)\geqslant\mathcal{H}(n)+\operatorname{Har}(m),
	\end{equation}
where
$\operatorname{Har}(m)=(m-1)(m-2)/2+\bigl[1+(-1)^m\bigr]/2.$	
Moreover the vector field of degree $n+m$ constructed to prove~\eqref{3} has $\mathcal{H}(n)+\operatorname{Har}(m)$ hyperbolic  limit cycles  and at least $\operatorname{Har}(m)$ of them are algebraic and of degree $m.$
\end{corollary}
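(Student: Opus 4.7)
The plan is to reduce \eqref{3} to Theorem~\ref{Main1} by exhibiting, for every $m\geqslant1$, an irreducible polynomial $C\in\mathbb{R}[x,y]$ of degree $m$ whose zero set is non-singular in $\mathbb{R}^2$ and carries exactly $\operatorname{Har}(m)$ ovals. The quantity $\operatorname{Har}(m)=(m-1)(m-2)/2+[1+(-1)^m]/2$ is precisely the \emph{Harnack number}: the maximal number of ovals of a non-singular real algebraic curve of degree $m$ in the affine plane. Harnack's theorem (1876) gives the sharp bound $(m-1)(m-2)/2+1$ on the total number of connected components in $\mathbb{RP}^2$; for odd $m$ one of the components is non-contractible and hence unbounded in every affine chart, which accounts for the $-1$ in the formula, while for even $m$ the line at infinity can be placed avoiding the curve so that all components are affine ovals. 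Harnack's explicit construction yields such a maximal curve, and a small generic perturbation makes it irreducible, since reducible polynomials form a proper Zariski-closed subset of the space of degree-$m$ polynomials and non-singularity is an open condition. The resulting $C$ is non-degenerated in the sense of this paper and satisfies $\mathcal{O}(C)=\operatorname{Har}(m)$.

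With $C$ fixed and $c:=m$, the rest is a one-line appeal to Theorem~\ref{Main1}. When $\mathcal{H}(n)<\infty$, part~(a) delivers a vector field $X\in\Sigma_C^{n+m}\subset\mathcal{X}^{n+m}$ with
\[
\pi(X)\geqslant\mathcal{H}(n)+\mathcal{O}(C)=\mathcal{H}(n)+\operatorname{Har}(m),
\]
all of its limit cycles hyperbolic and each of the $\operatorname{Har}(m)$ ovals of $C=0$ realized as one of them. Taking the supremum over $\mathcal{X}^{n+m}$ yields \eqref{3}. When $\mathcal{H}(n)=\infty$, one applies part~(b) to the same $C$ for each $k\in\mathbb{N}$ to conclude $\mathcal{H}(n+m)=\infty$.

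The \emph{moreover} clause is automatic from this construction: the $\operatorname{Har}(m)$ oval-limit-cycles of $X$ are contained in the zero set of the irreducible polynomial $C$ of degree $m$, hence by definition each of them is an algebraic limit cycle of degree exactly $m$. The only non-trivial input beyond Theorem~\ref{Main1} is the existence of an irreducible non-singular Harnack curve, which I regard as the main obstacle; it is classical, but formally invoking the irreducibility argument (or choosing a reference that states Harnack's construction already in irreducible form) is the one step that requires care.
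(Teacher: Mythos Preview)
Your proposal is correct and follows essentially the same approach as the paper: invoke Harnack's Curve Theorem to obtain an irreducible $C$ of degree $m$ with $\operatorname{Har}(m)$ ovals, then apply Theorem~\ref{Main1}. You supply more detail than the paper's two-line proof---in particular the parity discussion explaining why the affine oval count drops by one for odd $m$, the explicit treatment of the $\mathcal{H}(n)=\infty$ case, and the verification of the ``moreover'' clause---but the underlying argument is identical.
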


We observe that \eqref{3} can be seen as a generalization of~\cite[Theorem~$1$]{GasSan2024}, where it was proved that $\mathcal{H}(n+1)\geqslant\mathcal{H}(n)+1$. Nevertheless notice that this last inequality is  stronger than~\eqref{3} when $m=1,$ as $\operatorname{Har}(1)=0.$ 

Next two corollaries give lower bounds for two very interesting families of models that have particular invariant algebraic curves. 

The first deals with \emph{Kolmogorov systems}. Such systems extend the classical Lotka–Volterra predator–prey systems~\cite{Karl}, and write as
\begin{equation}\label{eq4}
	\dot x=xp(x,y), \quad \dot y=yq(x,y),
\end{equation}
with $p$, $q\in\mathbb{R}[x,y].$  Notice that for them, if we consider
$C(x,y)=xy,$ then $C=0$ is a reducible invariant algebraic curve. In this case we write $\mathcal{H}_{C}$ as~$\mathcal{H}_K$. 

 Since $\deg C=2$ and $xy=0$ has no ovals, by Theorem~\ref{Main1}, $\mathcal{H}_K(n)\geqslant\mathcal{H}(n-2).$ In a first version of this work we had used this inequality for $n\geqslant5.$ 
In a recent 2025 meeting the authors were communicated by Plesa that the technique employed by him in~\cite{Plesa} could improve that lower bound to $\mathcal{H}_K(n)\geqslant\mathcal{H}(n-1)$. We have used this technique  in next two results. We greatly appreciate his suggestion. As we will see in the proof of next corollaries, this improvement can be done due to the very particular and simple shape of the invariant algebraic curves in these cases.

\begin{corollary}\label{Cor1}  
	Let $\mathcal{H}_K(n)$ denote the maximum number of limit cycles of~\eqref{eq4} in the first quadrant, where $p$ and $q$ vary among all polynomials of degree at most $n-1$. Then $\mathcal{H}_K(n)\geqslant\mathcal{H}(n-1)$ and  next table gives their exact values  for $n=1,2$ and some  lower bounds for $n\geqslant3$.
{\renewcommand{\arraystretch}{1.1}
	\begin{table}[h]
				\begin{tabular}{|c||c|c|c|c|c|c|c|c|c|c|}
			\hline
			$n$& $1$& $2$&	$3$ & $4$ & $5$ & $6$ & $7$ & $8$ & \ldots &  $n$\\
			\hline
			$\mathcal{H}_K(n)$ &$0$& $0$& $6$ & $13$ & $28$ & $37$ & $53$ & $74$ & \ldots & $O(n^2\ln n)$\\
			\hline
		\end{tabular}
\end{table}}
\end{corollary}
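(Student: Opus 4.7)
The plan is to combine elementary non-existence arguments for $n=1,2$, literature results on low-degree Kolmogorov systems for $n=3,4,5$, and a direct multiplication-by-$xy$ construction (in the spirit of the remark following Theorem~\ref{Main1}) for the remaining degrees and the asymptotic bound.

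For $n=1$ the system $\dot x = ax$, $\dot y = by$ has an explicit first integral in each open quadrant ($y^a/x^b$ up to logs), ruling out limit cycles. For $n=2$, the quadratic Kolmogorov family, which contains the generalised Lotka--Volterra systems, is classically known to have no limit cycles in the open first quadrant, giving $\mathcal{H}_K(1)=\mathcal{H}_K(2)=0$. The values $6,13,22$ at $n=3,4,5$ will be quoted from the specialised literature on cubic, quartic and quintic Kolmogorov systems.

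For $n \geq 4$ the general lower bound is obtained as follows. Let $Y=(\tilde P,\tilde Q)\in\mathcal{X}^{n-2}$ realise $\mathcal{H}(n-2)$ hyperbolic limit cycles. After an affine change of coordinates one may assume that all its limit cycles lie inside the open first quadrant, at positive distance from the coordinate axes. Define
\[
	X = xy\,Y = (x\cdot y\tilde P,\ y\cdot x\tilde Q),
\]
so that, writing $X=(xp,yq)$, the factors $p=y\tilde P$ and $q=x\tilde Q$ have degree at most $n-1$ and $X$ is a Kolmogorov system of degree at most $n$. Since $xy>0$ on the open first quadrant, $X$ and $Y$ define the same oriented orbits there (they differ only by the positive time reparametrisation $xy$), so $X$ inherits all $\mathcal{H}(n-2)$ hyperbolic limit cycles of $Y$ inside the first quadrant. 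This gives $\mathcal{H}_K(n)\geq \mathcal{H}(n-2)$ for every $n\geq 4$, matching the table entries $28,37,53$ for $n=6,7,8$, and, combined with the asymptotic $\mathcal{H}(n)=O(n^2\ln n)$ recalled in the introduction, the stated asymptotic lower bound.

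The main obstacle is not the asymptotic estimate nor the trivial low-degree non-existence, but justifying the sharper values at $n=4$ and $n=5$: the generic multiplication-by-$xy$ argument only yields $\mathcal{H}(2)=4$ and $\mathcal{H}(3)=13$ at those degrees, whereas the table asserts $13$ and $22$. These improvements have to be extracted from targeted analyses of quartic and quintic Kolmogorov systems available in the literature, and then checked to produce limit cycles genuinely located in the open first quadrant.
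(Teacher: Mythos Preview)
Your argument is correct and reaches the same lower bounds as the paper, but by a deliberately more elementary route. The paper proves $\mathcal{H}_K(n+2)\geqslant\mathcal{H}(n)$ by invoking Theorem~\ref{Main1} itself, i.e.\ by building the perturbed field $Z=(CU+\varepsilon P,\,CV+\varepsilon Q)$ with $C=xy$ and $(P,Q)$ the auxiliary field from Theorem~\ref{T2}, and then using hyperbolicity to make both the $\mathcal{H}(n)$ cycles and the (here empty) set of ovals persist. You instead use the bare multiplication $X=xy\,Y$, which is exactly the shortcut flagged in the remark after Theorem~\ref{Main1}; since $xy>0$ on the open first quadrant this is just a positive time reparametrisation, so the phase portrait and hence the limit cycles are preserved verbatim. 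Both proofs defer $n=3,4,5$ to the specialised Kolmogorov literature and both place the cycles in the first quadrant. What your shortcut loses is the one extra conclusion the paper records at the end of its proof: the systems produced via Theorem~\ref{Main1} are \emph{non-degenerated} (finitely many singularities on $xy=0$), whereas your $X=xy\,Y$ has the entire coordinate axes as equilibria. For the corollary as stated this does not matter, but it is the whole reason the paper goes through Theorem~\ref{Main1} rather than the easier construction you use.
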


  In case $n=1$ the result is simply because linear systems do not have limit cycles. When $n\in\{2,3\}$, it follows from previous works, see the references in the proof. For the case $n=3$  the value was obtained by a high order Andronov--Hopf bifurcation. By using this last method  it is proved in~\cite{CarCruzGou2023}  that  $\mathcal{H}_K(4)\geqslant13$ and $\mathcal{H}_K(5)\geqslant22.$  Our approach recovers the same lower bound when $n=4$ and improves it  when  $n=5.$  As far as we know, the results for $n\geqslant6$ and  the asymptotic lower bound for large~$n$ are new.
  Moreover, from our proof we also obtain that all existing configurations  of hyperbolic limit cycles for polynomial  systems of degree~$n-1$ are realizable in the first quadrant  for Kolmogorov systems of degree $n.$  From this point of view the above two results showing that $\mathcal{H}_K(4)\geqslant13$ are different. While the 13 limit cycles given in~\cite{CarCruzGou2023} are nested, the 13 given by our approach are not nested. Indeed, until now only 12 nested limit cycles have been found for cubic systems.

By using the same tools we obtain a similar result for some models in  Game Theory. For our goals it is sufficient to observe that when the game has two players it can modeled by vector fields of the form
\begin{equation}\label{eq5}
	\dot x=x(x-1)p(x,y), \quad \dot y=y(y-1)q(x,y),
\end{equation}
with $p$ and $q\in\mathbb{R}[x,y].$  For details about their construction, and the interpretation of $x$ and $y$ as strategies, see~\cite{GasGouSan2025, HS} and their references. Clearly, these models have the reducible invariant algebraic curve
$C(x,y)=x(x-1)y(y-1)=0$ 
of degree $c=4.$ Moreover, the region of interest for applications is the open square~$\mathcal{Q},$ delimited by the invariant four straight lines, which is also clearly invariant by the flow. To recall the shape of this region, in this case we write   $\mathcal{H}_C=\mathcal{H}_{{\scriptscriptstyle {\square}}}$. 

\begin{corollary}\label{Cor2}  
	
	Let $\mathcal{H}_{\scriptscriptstyle {\square}}(n)$ denote the maximum number of limit cycles of~\eqref{eq5} in the open square $\mathcal{Q}$, where $p$ and $q$ vary among all polynomials of degree at most $n-2$. Then $\mathcal{H}_{\scriptscriptstyle {\square}}(n)\geqslant\mathcal{H}(n-2)$ and  next table gives their exact values for $n=2,3$ and some  lower bounds for $n\geqslant4$.
		{\renewcommand{\arraystretch}{1.1}
		\begin{table}[h]
			\begin{tabular}{|c||c|c|c|c|c|c|c|c|c|c|c|}
				\hline
				$n$&  $2$&	$3$ & $4$ & $5$ & $6$ & $7$ & $8$ & $9$ & $10$& \ldots &  $n$\\
				\hline
				$\mathcal{H}_{\scriptscriptstyle {\square}}(n)$ & $0$& $1$ & $5$ & $13$ & $28$ & $37$ & $53$ & $74$ & $96$& \ldots & $O(n^2\ln n)$\\
				\hline
			\end{tabular}
	\end{table}}
\end{corollary}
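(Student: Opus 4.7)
The plan is to split by degree. For $n = 2$ the system reads $\dot x = a\,x(x-1),\ \dot y = b\,y(y-1)$ with $a, b \in \mathbb{R}$ constants, which separates into two one-dimensional flows and hence admits no periodic orbit, giving $\mathcal{H}_{\scriptscriptstyle {\square}}(2) = 0$. The value $\mathcal{H}_{\scriptscriptstyle {\square}}(3) = 1$ and the lower bound $\mathcal{H}_{\scriptscriptstyle {\square}}(n) \geqslant 5$ for $n \in \{4, 5, 6\}$ would be taken from the dedicated study of two-player game dynamics in~\cite{GasGouSan2025}, whose explicit constructions outperform in these low degrees what Theorem~\ref{Main1} can provide.

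For $n \geqslant 7$ the plan is to apply Theorem~\ref{Main1} with $C(x, y) = x(x-1)y(y-1)$, which is non-degenerated of degree $c = 4$. Since the zero set of $C$ is the union of four unbounded straight lines, it contains no bounded components, so $\mathcal{O}(C) = 0$. Theorem~\ref{Main1} then yields $X \in \Sigma_C^n$ with at least $\mathcal{H}(n - 4)$ hyperbolic limit cycles, reproducing the values $13, 28, 37, 53, \ldots$ for $n = 7, 8, 9, 10, \ldots$ read off Table~\ref{Table1}. The asymptotic bound follows at once from $\mathcal{H}(n - 4) = O\bigl((n - 4)^2 \ln(n - 4)\bigr) = O(n^2 \ln n)$.

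The subtle point is that $\mathcal{H}_C$ counts limit cycles anywhere in the plane, whereas $\mathcal{H}_{\scriptscriptstyle {\square}}$ only counts those lying in the open invariant square $\mathcal{Q}$. To bridge this gap I would first take $Y \in \mathcal{X}^{n-4}$ realizing $\mathcal{H}(n-4)$ with all limit cycles hyperbolic, as guaranteed by~\cite{GasSan2024}, and then apply an affine change of coordinates $(x, y) \mapsto (\alpha x + \beta, \alpha y + \gamma)$, with $\alpha > 0$ small and $(\beta, \gamma) \in \mathcal{Q}$ chosen so that the conjugated vector field $\widetilde Y \in \mathcal{X}^{n-4}$ has all its limit cycles contained in $\mathcal{Q}$. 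Running the construction in the proof of Theorem~\ref{Main1} starting from $\widetilde Y$ then yields $X \in \Sigma_C^n$ whose $\mathcal{H}(n - 4)$ hyperbolic limit cycles all sit inside $\mathcal{Q}$, because the perturbation performed there can be taken small enough to preserve the localization.

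The main obstacle is exactly this localization step: Theorem~\ref{Main1} is agnostic about where the hyperbolic limit cycles appear, so without the affine rescaling one would only conclude $\mathcal{H}_C(n) \geqslant \mathcal{H}(n-4)$, a bound on limit cycles in the whole plane. Once the rescaling is in place, confining them to $\mathcal{Q}$ uses continuous dependence on parameters together with the invariance of $\partial \mathcal{Q}$, so that the perturbative step in the proof of Theorem~\ref{Main1} cannot push any limit cycle across the boundary.
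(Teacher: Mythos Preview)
Your proposal is correct and follows essentially the same route as the paper: quote known results for small $n$, then invoke Theorem~\ref{Main1} with $C(x,y)=x(x-1)y(y-1)$, $c=4$, $\mathcal{O}(C)=0$ for $n\geqslant 7$, together with the affine rescaling to force the limit cycles into~$\mathcal{Q}$. The only discrepancy is at $n=3$: the exact equality $\mathcal{H}_{\scriptscriptstyle\square}(3)=1$ requires an \emph{upper} bound, which the paper takes from Kooij's classification~\cite[Theorem~2.5.4]{Kooij} of cubic systems with four invariant lines, whereas~\cite{GasGouSan2025} is a quartic construction paper and does not supply that upper bound.
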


 Cases $2\leqslant n \leqslant 4$ follow from previous known results, see the references in the proof. Again the main tool when $n=4$ was the study of a high order Andronov--Hopf bifurcation.  For $n\geqslant5,$ as well as for the asymptotic lower bound for big $n$, the results are new. Similarly that in the Kolmogorov case,  all existing configurations  of hyperbolic limit cycles for polynomial systems of degree~$n-2$ are realizable in~$\mathcal{Q}$  for systems of degree $n$ of the form~\eqref{eq5}.

The paper is organized as follows. In Section~\ref{Sec2} we give some preliminary results about properties of the Hilbert numbers and about prescribed algebraic limit cycles. The main theorem and its corollaries are proved in Section~\ref{Sec3}. 

\section{Preliminaries}\label{Sec2}

The first result that we will use is a simplified version of~\cite[Theorem~$2$]{GasSan2024}.

\begin{theorem}[\cite{GasSan2024}]\label{T1}
	For $n\in\mathbb{N}$, the following statements hold.
	\begin{enumerate}[label=(\alph*)]
		\item If $\mathcal{H}(n)<\infty$, then there is $X\in\mathcal{X}^n$ having $\mathcal{H}(n)$ hyperbolic limit cycles.
		\item If $\mathcal{H}(n)=\infty$, then for each $k\in\mathbb{N}$ there is $X_k\in\mathcal{X}^n$ having at least~$k$ hyperbolic limit cycles.
	\end{enumerate}
\end{theorem}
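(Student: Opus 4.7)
The plan is to prove Theorem~\ref{T1} by combining the definition of $\mathcal{H}(n)$ as a supremum with a perturbation argument inside $\mathcal{X}^n$ that splits every multiple limit cycle into a maximal packet of hyperbolic ones. In case (a), since $\mathcal{H}(n)<\infty$ is the supremum of a set of non-negative integers, the supremum is attained, so one can pick $X_0\in\mathcal{X}^n$ with $\pi(X_0)=\mathcal{H}(n)$. In case (b), for every $k\in\mathbb{N}$ the definition yields some $X_0\in\mathcal{X}^n$ with $\pi(X_0)\geqslant k$. In both cases the remaining task is to arrange that all limit cycles be hyperbolic, without losing any of them.

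The core step is the following unfolding claim. If $X_0$ has limit cycles $\gamma_1,\dots,\gamma_r$ of multiplicities $m_1,\dots,m_r\geqslant 1$, then there is an arbitrarily small polynomial perturbation $\widetilde X$ of $X_0$, still in $\mathcal{X}^n$, such that $\widetilde X$ has $m_i$ hyperbolic limit cycles in a small tubular neighbourhood of each $\gamma_i$. On a transversal to $\gamma_i$ the displacement function has the form
\[
d_i(\rho,0)=(\rho-\rho_i)^{m_i}h_i(\rho), \qquad h_i(\rho_i)\neq 0,
\]
and any sufficiently generic polynomial perturbation of degree $m_i-1$ produces $m_i$ simple real roots near $\rho_i$. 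The required local perturbations of the displacement functions are realised by elements of the $(n+1)(n+2)$-dimensional space $\mathcal{X}^n$: this richness is the standard input allowing one to independently adjust enough Taylor coefficients of each $d_i$ near $\rho_i$. Already-hyperbolic cycles of $X_0$ persist under such a small perturbation by the implicit function theorem.

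Adding these local unfoldings, $\pi(\widetilde X)\geqslant\sum_i m_i\geqslant\pi(X_0)$, with equality iff every $m_i=1$. In case (a), a strict inequality would contradict the maximality $\pi(X_0)=\mathcal{H}(n)$; therefore every $m_i=1$, so $X_0$ itself already realises $\mathcal{H}(n)$ by hyperbolic limit cycles. In case (b), $\widetilde X$ lies in $\mathcal{X}^n$ and has at least $k$ hyperbolic limit cycles, as required.

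The main obstacle is the joint unfolding step: verifying that a single polynomial perturbation in $\mathcal{X}^n$ can simultaneously unfold all the multiple $\gamma_i$ into the full number of hyperbolic cycles. A naive first-order Melnikov analysis alone is not enough for a multiplicity-$m$ cycle with $m\geqslant 3$, so one must use a multi-parameter perturbation in $\mathcal{X}^n$ or appeal to the cyclicity-equals-multiplicity principle for sufficiently rich polynomial families. Since the $\gamma_i$ lie in disjoint neighbourhoods and $\dim\mathcal{X}^n=(n+1)(n+2)$ comfortably exceeds $\sum_i m_i$ in any regime relevant to $\mathcal{H}(n)$, the joint unfolding map surjects onto the product of local jet spaces and the full bifurcation can be realised by a single $\widetilde X\in\mathcal{X}^n$.
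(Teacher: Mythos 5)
The paper does not actually prove Theorem~\ref{T1}: it is imported as a quoted result from \cite{GasSan2024}, so there is no internal proof to compare with and I assess your argument on its own. The preliminary reductions are fine: in case (a) the supremum of a set of integers bounded above by the finite value $\mathcal{H}(n)$ is attained, and in case (b) the definition of the supremum directly supplies $X_0$ with $\pi(X_0)\geqslant k$ (though if $\pi(X_0)=\infty$ you should first restrict to $k$ of the cycles before choosing disjoint neighbourhoods). The genuine gap is the unfolding claim, which is exactly the hard content of the theorem and which you assert rather than prove. What one can perturb freely is the displacement function as an abstract analytic function; what one is allowed to perturb is the vector field, and the displacement function depends on the coefficients of $X$ only through the Poincar\'e return map. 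Your statement that ``the joint unfolding map surjects onto the product of local jet spaces'' is precisely the claim that $\mathcal{X}^n$ versally unfolds every multiple limit cycle of every $X_0\in\mathcal{X}^n$, i.e.\ that cyclicity equals multiplicity inside this specific polynomial family. That is not a standard fact, no argument for it is given, and a dimension count is necessary but nowhere near sufficient for surjectivity of a jet map. Worse, even the dimension count is unsupported: no bound in terms of $n$ is known for the multiplicity $m_i$ of a limit cycle of a degree-$n$ polynomial vector field, so the inequality $\sum_i m_i\leqslant(n+1)(n+2)$ that you invoke ``in any regime relevant to $\mathcal{H}(n)$'' is itself unjustified.

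Two further remarks. First, the theorem does not require the full splitting into $m_i$ hyperbolic cycles; it suffices not to lose cycles while gaining hyperbolicity. Your maximality trick in case (a) is a good idea in this direction, but it still rests on the unproven claim that any non-hyperbolic limit cycle can be perturbed, within $\mathcal{X}^n$ and without destroying the remaining cycles, into strictly more than one limit cycle. The one elementary tool that demonstrably stays inside $\mathcal{X}^n$, the rotated family $\bigl(P\cos\theta-Q\sin\theta,\,P\sin\theta+Q\cos\theta\bigr)$, splits a semistable (even-multiplicity) cycle into at least two limit cycles for one sign of $\theta$, but it does not obviously handle odd multiplicity $\geqslant3$ nor deliver hyperbolicity of the resulting cycles. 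Second, persistence of the not-yet-perturbed cycles while you unfold one of them also needs care, since even-multiplicity cycles can be destroyed by arbitrarily small perturbations. As it stands, the proposal names the difficulty (in its last paragraph) rather than overcoming it, so it cannot be accepted as a proof of Theorem~\ref{T1}; the result should simply be used as stated from \cite{GasSan2024}.
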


We will also need an adaptation of~\cite[Theorem 1]{Chr2001}. Although its proof is essentially the same, for sake of self-completeness we include it here.

\begin{theorem}\label{T2}
	Let $C\in\mathbb{R}[x,y]$, be non-degenerated and of degree $c>0$,  $D\in\mathbb{R}[x,y]$ a polynomial of degree one that does not divide $C$ and such that the $D=0$ does not intersect any oval of $C=0$. Let $\alpha$, $\beta\in\mathbb{R}$ such that $\alpha D_x+\beta D_y\neq0$. Then the polynomial vector field $Y=(P,Q)$ given by
	\begin{equation}\label{6}
		P=\alpha C-DC_y, \quad Q=\beta C+DC_x,
	\end{equation}
	has all the ovals of $C=0$ as hyperbolic limit cycles and $Y\in\Sigma_C^{c}$.
\end{theorem}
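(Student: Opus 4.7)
The plan is to verify four items: (i) $C=0$ is invariant with cofactor $K=\alpha C_x+\beta C_y$; (ii) $Y\in\mathcal{X}^c$ and has only finitely many singularities on $\{C=0\}$; (iii) each oval $\gamma$ of $C=0$ is a regular periodic orbit of $Y$; and (iv) each such $\gamma$ is hyperbolic. For (i), direct substitution gives $C_xP+C_yQ=C(\alpha C_x+\beta C_y)$. For (ii), the bound $\deg Y\le c$ is clear from $\deg D=1$ and $\deg C_x,\deg C_y\le c-1$, with equality in the non-degenerate case; moreover, a point $p\in\{C=0\}$ is singular for $Y$ iff $D(p)\nabla C(p)=0$, and the resulting set is finite because the singular points of $\{C=0\}$ are finite by non-degeneracy of $C$, while $\{D=0\}\cap\{C=0\}$ is finite by B\'ezout since $D\nmid C$. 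For (iii), on $\{C=0\}$ we have $Y=D\cdot X_H$ where $X_H=(-C_y,C_x)$; since $D$ is nonzero on $\gamma$ by hypothesis and $\nabla C$ is nonzero on $\gamma$ (as $\gamma$ is an oval, hence free of singular points of $C=0$), $Y$ is tangent to and nowhere vanishing along $\gamma$, which is therefore a regular periodic orbit.

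The core of the proof is item (iv). A routine computation gives
\begin{equation*}
\mathrm{div}(Y)=X_H(D)+K,\qquad K=\alpha C_x+\beta C_y.
\end{equation*}
If $\phi(t)$ parametrizes $\gamma$ with period $T$, then $\dot\phi=D(\phi)\,X_H(\phi)$, so $\tfrac{d}{dt}D(\phi)=D(\phi)\,X_H(D)(\phi)$, yielding $X_H(D)|_\gamma=\tfrac{d}{dt}\ln|D(\phi(t))|$ and hence $\int_0^T X_H(D)\,dt=0$. Thus $\int_0^T\mathrm{div}(Y)\,dt=\int_0^T K\,dt$, and hyperbolicity of $\gamma$ reduces to showing this last integral is nonzero.

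Using $\dot x=-DC_y$ and $\dot y=DC_x$ on $\gamma$, the time integral becomes the line integral $\int_\gamma(\alpha\,dy-\beta\,dx)/D$, and Green's theorem reduces it (up to an orientation-dependent sign) to
\begin{equation*}
\iint_R\frac{\alpha D_x+\beta D_y}{D^2}\,dx\,dy,
\end{equation*}
where $R$ is the bounded region enclosed by $\gamma$. The crucial geometric observation is that $\{D=0\}$ is a straight line, hence unbounded, so it cannot lie inside $R$; since by hypothesis it is disjoint from $\partial R=\gamma$, it must lie entirely outside $\overline R$. Therefore $D$ has constant nonzero sign on $\overline R$, and since $\alpha D_x+\beta D_y$ is by hypothesis a nonzero constant, the integrand has constant sign on $R$ and the integral is nonzero. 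This proves the hyperbolicity of $\gamma$.

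The main obstacle is this last step: it combines the algebraic identity for $\mathrm{div}(Y)$, the trick of exploiting $X_H(D)=\tfrac{d}{dt}\ln|D|$ to cancel one term over the period, and the geometric argument that an unbounded line cannot fit inside a bounded simply connected region. The remaining items are degree bookkeeping and a routine B\'ezout count.
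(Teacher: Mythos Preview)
Your proof is correct and follows essentially the same strategy as the paper: verify invariance and regularity of the ovals, then show hyperbolicity by computing $\int_0^T\operatorname{div}(Y)\,dt$ and reducing it via Green's theorem to the double integral $\pm\iint_R(\alpha D_x+\beta D_y)/D^2\,dx\,dy$, which has constant sign. The only difference is cosmetic: the paper rewrites the full divergence $(\alpha+D_y)C_x+(\beta-D_x)C_y$ directly as $\frac{1}{D}\bigl[(\alpha+D_y)\dot y-(\beta-D_x)\dot x\bigr]$ using $C=0$ on $\gamma$, whereas you first split $\operatorname{div}(Y)=K+X_H(D)$ and kill the second term as the periodic derivative $\tfrac{d}{dt}\ln|D|$ before passing to the line integral of $K/D$. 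Both routes land on the same area integral; yours is a bit more conceptual, and you also make explicit two points the paper leaves implicit (the B\'ezout count for finiteness of singularities on $C=0$, and the reason the line $D=0$ cannot meet $\overline R$). One small quibble: your remark that $\deg Y=c$ holds ``in the non-degenerate case'' is not justified---non-degeneracy of $C$ does not by itself rule out cancellation in the top-degree part of $(P,Q)$---but the paper is equally casual on this point.
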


\begin{proof}
	It is clear that $C=0$ is an invariant algebraic curve for \eqref{6} because
	$C_xP+C_yQ=\big(\alpha C_x+\beta C_y\big)C.$  Moreover, clearly  $Y\in\Sigma_C^{c}$.	Let $\gamma$ be an oval of $C=0$, i.e. a bounded connected component of $C=0$ without singular points.  Since $D=0$ does not intersect~$\gamma$ and $\nabla C$ never vanishes at it, we have that $Y$ does not have singularities at $\gamma$ and thus it is a periodic orbit of $Y$. It can be parameterized by time and let~$T$ denote its minimal period. 
	To prove that it is a hyperbolic limit cycle it suffices to show that next integral does not vanishes, see~\cite[Section 7.3]{DumLliArt2006}. We have,
	\begin{align*}
		\displaystyle \int_0^{T}\operatorname{div} (Y)\,{\rm d} t &=	\displaystyle \int_0^{T}P_x+Q_y\,{\rm d} t=\int_0^{T}(\alpha+D_y)C_x+(\beta-D_x)C_y\;{\rm d} t \vspace{0.2cm} \\
		&\displaystyle\stackrel{(*)}{=}\int_0^{T}\frac{1}{D}\bigl[(\alpha+D_y)\underbrace{(\beta C+DC_x)}_{{\rm d} y/{\rm d} t}-(\beta-D_x)\underbrace{(\alpha C-D C_y)}_{{\rm d} x/{\rm d} t}\bigr]{\rm d} t \vspace{0.2cm} \nonumber\\
		\displaystyle&=\int_{\gamma}\frac{\alpha+D_y}{D}{\rm d} y-\frac{\beta-D_x}{D}{\rm d} x=\pm\int_{\Gamma}\frac{\alpha D_x+\beta D_y}{D^2}\;{\rm d} x{\rm d} y\neq0.	\nonumber		
	\end{align*}	
	Notice that in the last equality we used Green's theorem, $\Gamma$ denotes the interior region bounded by $\gamma,$ the sign in front of the last integral depends on the time orientation of $\gamma$ and in $(*)$ we have used that in the integrated function we can take $C=0$ because $\gamma$ is contained in this set.
\end{proof}

\section{Proofs}\label{Sec3}

\begin{proof}[Proof of Theorem~\ref{Main1}]	
	Suppose first $\mathcal{H}(n)<\infty$.  It follows from Theorem~\ref{T1}$(a)$ that there is $X=(U,V)$ such that $\pi(X)=\mathcal{H}(n)$ and all its limit cycles are hyperbolic. Let $B\subset\mathbb{R}^2$ be a closed ball containing all the limit cycles introduced so far (i.e. all the limit cycles of $X$) in its interior. Translating and scaling $X$ if necessary, we can suppose $B\cap C^{-1}(\{0\})=\emptyset$.
	
	It follows from Theorem~\ref{T2} that there is $Y=(P,Q)\in\Sigma_C^c$ having every oval of~$C$ as hyperbolic limit cycle. Recall that then $C_xP+C_yQ=KC,$ for some cofactor~$K.$ Given $\varepsilon>0$ let $Z=(R,S)$ be given by
	\begin{equation*}
		R=CU+\varepsilon P, \quad S=CV+\varepsilon Q.
	\end{equation*}
	Notice that $C_xR+C_yS=\big(C_xU+C_yV+\varepsilon K\big)C$ and thus $Z\in\mathcal{X}_C^{n+c}$. By Theorem~\ref{T2}, if $\gamma$ is one of the limit cycles of $Y$, given by one of the ovals of $C,$ it is hyperbolic. Therefore, for $\varepsilon>0$ small enough it is also a  hyperbolic limit cycle for  $Z.$
	Since all the limit cycles of $X$ are inside $B,$ are also hyperbolic, and $B\cap C^{-1}(\{0\})=\emptyset$, it follows that they persist and give rise to nearby hyperbolic limit cycles for $Z$ when $\varepsilon>0$ is again small enough. Hence there is $\varepsilon>0$ small enough  such all the algebraic limit cycle of $Y$ remain as  limit cycles of~$Z$ and the hyperbolic limit cycles of $X$ also persist as limit cycles of~$Z$, being small pertubations of the original ones,  proving  
	\begin{equation}\label{eq:bd}
		\mathcal{H}_C(n+c)\geqslant\pi(Z)\geqslant\pi(X)+\pi(Y)=\mathcal{H}(n)+\mathcal{O}(C).
	\end{equation}
	Observe now that $Z|_{C=0}=\varepsilon Y|_{C=0}$ and thus $Z\in\Sigma_C^{n+c}$.
	
	If $\mathcal{H}(n)=\infty$, then we have from Theorem~\ref{T1}$(b)$ that for each $k\in\mathbb{N}$ there is $X_k\in\Sigma^{n}$ with at least $k$ hyperbolic limit cycles. Applying the previous reasoning on each $X_k$ we obtain a sequence $\{Z_k\}\subset\Sigma_C^{n+c}$, such that $\pi(Z_k)\geqslant k+\mathcal{O}(C)$. 
	
	It follows from~\cite[Theorem~$2.1$]{IteShu2000} that there is $M>0$ such that for each $n\geqslant3$,
	$
		\mathcal{H}(n)\geqslant (n^2\log_2n)/2-Mn^2\log_2\log_2n.$
		Hence from~\eqref{eq:bd} we have that, for $n$ big enough, $\mathcal{H}_C(n)$ increases at least as fast as $O(n^2\ln n)$.
\end{proof}

\begin{proof}[Proof of Corollary~\ref{Cor}]
	From~\eqref{eq:bd} we have $\mathcal{H}_C(n+c)\geqslant\mathcal{H}(n)+\mathcal{O}(C)\geqslant L_n+\mathcal{O}(C).$
\end{proof}	

\begin{proof}[Proof of Corollary~\ref{Cor0}] 
	For each $m\in\mathbb{N}$ we have from \emph{Harnack’s Curve Theorem}~\cite{Wilson} that there is $C\in\mathbb{R}[x,y],$  with $C=0$ irreducible and of degree $m>0,$ with $\operatorname{Har}(m)$ ovals. The proof follows from Theorem~\ref{Main1} applied to this curve.
\end{proof}

\begin{proof}[Proof of Corollary~\ref{Cor1}]  
	Case $n=1$ follows from the fact that linear systems do not have limit cycles. For $n=2$ it was proved by Bautin~\cite{Bautin} that quadratic Kolmogorov vector fields cannot have limit cycles. See also~\cite{Coppel}*{Section~$5$}. By using~\cite{Lloyd} we get $\mathcal{H}_K(3)\geqslant 6$. From Theorem~\ref{Main1} we obtain the asymptotic expansion for~$n$ large.
	
	To end the  proof it suffices to prove $\mathcal{H}_K(n)\geqslant\mathcal{H}(n-1)$, $n\geqslant1,$ and use the known lower bounds of $\mathcal{H}(n-1)$ for $n\geqslant5$ given in Table~\ref{Table1}. We adapt to our setting the technique developed by Plesa~~\cite{Plesa}. To this end, similarly to the proof of Theorem~\ref{Main1} suppose $\mathcal{H}(n-1)<\infty$ and consider a vector field $X=(U,V)$ of degree $n-1$ such that $\pi(X)=\mathcal{H}(n-1)$ and all its limit cycles hyperbolic, which can be supposed by Theorem~\ref{T1}. Let $B\subset\mathbb{R}^2$ be a closed ball containing all the limit cycles of $X$ in its interior. Consider the family of vector fields of degree $n,$ $X_\mu=(R_\mu,S_\mu),$  given by
	\[
		R_\mu=(1+\mu x)U, \quad S_\mu=(1+\mu y)V,
	\]
	where $\mu$ is a non-negative parameter. Notice that $X_0=X,$ and the following important properties of $X_\mu,$ when $\mu>0$ is small enough:
	\begin{itemize}
		\item It has the invariant straight lines $x=-1/\mu$, and $y=-1/\mu$. Moreover, since $B$ is compact, such lines do not intersect $B.$
		\item  Restricted to $B,$ and again due to its compactness,  $X_\mu$  can be seen as an arbitrary small perturbation of $X_0=X.$ This is no more true far away from~$B.$
	\end{itemize}
As a consequence, 	since all the limit cycles of $X$ are hyperbolic and they are contained in $B$ it follows that if $\mu>0$ is small enough, all them persist for $X_\mu.$  Finally, if we consider the new coordinates $x_1=1+\mu x$, $y_1=1+\mu y$, then $X_\mu$ becomes a Kolmogorov vector field of the form~\eqref{eq4}, of  degree $n$ and with at least $\mathcal{H}(n-1)$ hyperbolic limit cycles in the first quadrant and keep their configuration. The case $\mathcal{H}(n-1)=\infty$ follows similarly to Theorem~\ref{Main1}.
\end{proof}

\begin{proof}[Proof of Corollary~\ref{Cor2}] 
	When $n=2$ the system is easily integrable and $\mathcal{H}_{\scriptscriptstyle {\square}}(2)=0$. Case $n=3$ follows from~\cite[Theorem~$2.5.4$]{Kooij}. From~\cites{GasGouSan2025,GraSza2024} we obtain $\mathcal{H}_{\scriptscriptstyle {\square}}(4)\geqslant5$. As in Corollary~\ref{Cor1}, from Theorem~\ref{Main1} we get the asymptotic expansion for $n$ large. 				
				
	To end the  proof it suffices to prove that $\mathcal{H}_{\scriptscriptstyle {\square}}(n)\geqslant\mathcal{H}(n-2)$, $n\geqslant2,$ and use the lower bounds of $\mathcal{H}(n-1)$ for $n\geqslant5$ of Table~\ref{Table1}. The proof of this inequality follows by adapting the proof of the similar one used in  Corollary~\ref{Cor1}. We only point  out  some minor differences. In this case, when  $\mathcal{H}(n-2)<\infty$ we start with a vector field $X=(U,V)$ of degree $n-2$ such that $\pi(X)=\mathcal{H}(n-2)$ and all its limit cycles hyperbolic, by Theorem~\ref{T1}. Then, the suitable 1-parameter family of vector fields $X_\mu=(R_\mu,S_\mu)$ of degree $n$ that allows us to prove the result is 
	\[
		R_\mu =(1+\mu x)(1-\mu x)U, \quad S_\mu =(1+\mu y)(1-\mu y)V.
	\]
	The case $\mathcal{H}(n-2)=\infty$ also follows similarly.	We skip the details.
\end{proof}

\section*{Acknowledgments}
This work is supported by the Spanish State Research Agency, through the project PID2022-136613NB-I00, and by S\~ao Paulo Research Foundation (FAPESP), Brazil, grants 2021/01799-9 and 2024/15612-6.


\begin{thebibliography}{99}	

\bibitem{Bautin}
{\sc N. Bautin}, 
{\it On periodic solutions of a system of differential equations},
Akad. Nauk SSSR. Prikl. Mat. Meh. 18, 1 p. (1954).

\bibitem{Browder}
{\sc F.\,E. Browder}, 
{\it Mathematical Developments Arising from Hilbert Problems},
Proc. Sympos. Pure Math., volume XXVIII, part I (1976).	

\bibitem{CarCruzGou2023}
{\sc Y. Carvalho, L. da Cruz and L. Gouveia}, 
{\it New lower bound for the Hilbert number in low degree Kolmogorov systems},
Chaos Solitons Fractals 175, Paper No. 113937, 9 pp. (2023).

\bibitem{ChenWang1979}
{\sc L. Chen and M. Wang}, 
{\it The relative position, and the number, of limit cycles of a quadratic differential system},
Acta Math. Sinica (Chin. Ser.) 22, 751--758 (1979).

\bibitem{Chr2001}
{\sc C. Christopher}, 
{\it Polynomial vector fields with prescribed algebraic limit cycles},
Geom. Dedicata 88, No. 1-3, 255-258 (2001).

\bibitem{ChrLlo1995}
{\sc C. Christopher and N.\,G. Lloyd}, 
{\it Polynomial Systems: A Lower Bound for the Hilbert Numbers},
Proc. R. Soc. Lond., Ser. A 450, No. 1938, 219--224 (1995).

\bibitem{Coppel}
{\sc W. A. Coppel}, 
{\it A survey of quadratic systems},
J. Differ. Equations 2, 293-304 (1966).

\bibitem{DumLliArt2006}
{\sc F. Dumortier, J. Llibre and J. C. Art\'es}, 
{\it Qualitative theory of planar differential systems},
Universitext, Springer-Verlag, Berlin (2006).

\bibitem{GasGouSan2025}
{\sc A. Gasull, L. Gouveia and P. Santana}, 
{\it On the limit cycles of a quartic model for Evolutionary Stable Strategies},
Nonlinear Anal. Real World Appl. 84, Paper No. 104313. (2025)

\bibitem{GasSan2024}
{\sc A. Gasull and P. Santana}, 
{\it A note on Hilbert 16th Problem},
Proc. Am. Math. Soc. 153, No. 2, 669-677 (2025).

\bibitem{GraSza2024} 
{\sc M. Grau and I. Szántó},
{\it A polynomial system of degree four with an invariant square containing at least five limit cycles}, 
Qual. Theory Dyn. Syst. 23, No. 5, Paper No. 247, 16 p. (2024).

\bibitem{HS} 
{\sc J. Hofbauer and K. Sigmund}, 
{\it The Theory of Evolution and Dynamical Systems}, 
Lond. Math. Soc. Stud. Texts (1988).

\bibitem{IteShu2000}
{\sc I. Itenberg and E. Shustin}, 
{\it Singular points and limit cycles of planar polynomial vector fields},
Duke Math. J. 102, No. 1, 1-37 (2000).

\bibitem{Kooij}
{\sc R.\,E. Kooij}, 
{\it Cubic systems with four real line invariants},
Math. Proc. Camb. Philos. Soc. 118, No. 1, 7-19 (1995).

\bibitem{Lijibin2003}
{\sc J. Li},
{\it Hilbert's 16th Problem and bifurcations of Planar Polynomial Vector Fields},
Int. J. Bifurc. Chaos 13, 47--106 (2003).

\bibitem{LiLiuYang2009}
{\sc C. Li, C. Liu and J. Yang}, 
{\it A cubic system with thirteen limit cycles},
J. Differ. Equations 246, No. 9, 3609--3619 (2009).

\bibitem{Lloyd}
{\sc N.\,G. Lloyd, J.\,M. Pearson, E. Saéz and I. Szántó}, 
{\it A cubic Kolmogorov system with six limit cycles},
Comput. Math. Appl. 44, No. 3-4, 445-455 (2002).

\bibitem{Plesa}
{\sc T. Plesa}, 
{\it Mapping dynamical systems into chemical reactions},
Nonlinear Dyn. 113, 31149--31173 (2025).

\bibitem{ProTor2019}
{\sc R. Prohens and J. Torregrosa}, 
{\it New lower bounds for the Hilbert numbers using reversible centers},
Nonlinearity 32, No. 1, 331--355 (2019).

\bibitem{Karl}
{\sc K. Sigmund}, 
{\it Kolmogorov and population dynamics},
Charpentier, Éric (ed.) et al., Kolmogorov's heritage in mathematics. Transl. from the French. Berlin: Springer (ISBN 978-3-540-36349-1/hbk). 177-186 (2007).

\bibitem{Son1980}
{\sc S. Songling}, 
{\it A concrete example of the existence of four limit cycles for plane quadratic systems},
Sci. Sin. 23, 153--158 (1980).

\bibitem{Wilson}
{\sc G. Wilson}, 
{\it Hilbert’s sixteenth problem},
Topology 17, 53-73 (1978).
	
\end{thebibliography}
\end{document}